\newtheorem{theorem}{Theorem}
\newtheorem{proposition}[theorem]{Proposition}
\DeclareMathOperator*{\loc}{loc}
\newcommand*\diff{\mathrm{d}}
\DeclareMathOperator*{\Ss}{S}
\newcommand{\Lp}[1]{L^{#1}(\Omega)}
\newcommand{\Wp}[1]{W^{1,#1}(\Omega)}
\newcommand{\Wpzero}[1]{W^{1,#1}_0(\Omega)}
\newcommand{\eps}{\varepsilon}
\newcommand{\ph}{\varphi}
\newcommand{\into}{\int_{\Omega}}
\newcommand{\weak}{\rightharpoonup}
\newcommand{\Linf}{L^{\infty}(\Omega)}
\newcommand{\close}{\overline{\Omega}}
\numberwithin{theorem}{section}
\numberwithin{equation}{section}
\newcommand{\R}{{\mathbb R}}
\newcommand{\ran}{\rangle}
\newcommand{\lan}{\langle}
\newcommand{\N}{{\mathbb N}}
\newcommand{\Assg}[1]{\textup{(g)}}
\title[Kirchhoff double phase problems with variable exponents]
{Infinitely many solutions to Kirchhoff double phase problems with variable exponents}
\author[K. Ho]{Ky Ho}
\address[K. Ho]{Department of Mathematical Sciences, Ulsan National Institute of Science and Technology, 50 UNIST-gil, Eonyang-eup, Ulju-gun, Ulsan 44919, Republic of Korea.}
\email{kyho@unist.ac.kr}
\author[P. Winkert]{Patrick Winkert}
\address[P. Winkert]{Technische Universit\"{a}t Berlin, Institut f\"{u}r Mathematik, Stra\ss e des 17.\,Juni 136, 10623 Berlin, Germany}
\email{winkert@math.tu-berlin.de}
\subjclass{35A15, 35J15, 35J60, 35J62}
\keywords{Double phase operator, Kirchhoff term, multiple solutions, variable exponents, variational methods.}
\begin{document}

\begin{abstract}
	In this work we deal with elliptic equations driven by the variable exponent double phase operator with a Kirchhoff term and a right-hand side that is just locally defined in terms of very mild assumptions. Based on an abstract critical point result of Kajikiya \cite{Kajikiya-2005} and recent a priori bounds for generalized double phase problems by the authors \cite{Ho-Winkert-2022}, we prove the existence of a sequence of nontrivial solutions whose $L^\infty$-norms converge to zero.
\end{abstract}

\maketitle

\section{Introduction}

In this paper we study multiplicity results for the following Kirchhoff-type problem
\begin{equation}\label{problem}
		-M\left(\int_\Omega \mathcal{A}(x,\nabla u)\,\diff x\right)\operatorname{div}A(x,\nabla u)= f(x,u)\quad\text{in } \Omega,
		\qquad u= 0 \quad \text{on } \partial\Omega,
\end{equation}
where $\Omega $ is a bounded domain in $\mathbb{R}^{N}$ with a Lipschitz boundary $\partial \Omega$, $\mathcal{A}\colon \Omega\times \mathbb{R}^N \to \mathbb{R}$ and $A\colon \Omega\times \mathbb{R}^N \to \mathbb{R}^N$ are given by
\begin{align*}
	\mathcal{A}(x,\xi)&:=\frac{1}{p(x)}|\xi|^{p(x)}+\frac{\mu(x)}{q(x)}|\xi|^{q(x)},
	\qquad
	A(x,\xi):=\nabla_\xi \mathcal{A}(x,\xi)=|\xi|^{p(x)-2}\xi+\mu(x)|\xi|^{q(x)-2}\xi.
\end{align*}
In the following, for $h \in C(\close)$ we denote $h^{-}:=\inf_{x\in \close} h(x)$ and $h^{+}:=\sup_{x\in \close} h(x)$.

We suppose the subsequent hypotheses:
\begin{enumerate}[label=\textnormal{(H$_1$)},ref=\textnormal{H$_1$}]
	\item\label{H1}
		$p,q\in C^{0,1}(\close)$ such that $1<p(x)<q(x)<N$ for all $x\in\close$, $\left(\frac{q}{p}\right)^+<1+\frac{1}{N}$ and $0 \leq \mu(\cdot)\in C^{0,1}(\close)$.
\end{enumerate}
\begin{enumerate}[label=\textnormal{(H$_2$)},ref=\textnormal{H$_2$}]
	\item\label{H2}
		$M\colon [0, \infty) \to \R$ is a function and $f\colon\Omega\times \R \to \R$ is a Carath\'eodory function such that the following conditions are satisfied:
		\begin{enumerate}[label=\textnormal{(\roman*)},ref=\textnormal{\roman*}]
			\item
				there exist positive constants $t_0,m_0$ such that $M\in C[0,t_0]$ and $m_0\leq M(t)\leq M(t_0)$ for all $t\in [0,t_0]$;
			\item\label{H2ii}
				there exists $\eps_0>0$ such that $f\colon \Omega\times [-\eps_0,\eps_0]\to\R$ is odd with respect to the second variable and $\sup_{|t|\leq\eps_0}\, |f(\cdot,t)|\in L^{\infty}(\Omega)$;
			\item\label{H2iii}
				there exists a nonempty open ball $B\subset\Omega$ such that
				\begin{align*}
					\lim_{t\to 0}\frac{F(x,t)}{|t|^{p_B^-}}=\infty  \quad\text{uniformly for a.\,a.\,}  x\in B,
				\end{align*}
				where $F(x,t):=\int_0^t f(x,\tau)\,\diff \tau$ and $p_B^-:=\inf_{x\in B} p(x)$.
	\end{enumerate}
\end{enumerate}

We shall look for solutions to problem~\eqref{problem} in the Musielak-Orlicz Sobolev space $\left(\Wpzero{\mathcal{H}},\|\cdot\|\right)$, where  $\mathcal{H}(x,t):=t^{p(x)} +\mu(x)t^{q(x)}$ for all $(x,t)\in \close \times [0,\infty)$ (see Section~\ref{Section-2} for the definitions). We call a function $u\in W_0^{1,\mathcal{H}}(\Omega)$ a solution of problem \eqref{problem} if $f(\cdot,u)\in L_{\loc}^1(\Omega)$ and if
\begin{align*}
	M\left(\int_\Omega \mathcal{A}(x,\nabla u)\,\diff x\right)\int_\Omega A(x,\nabla u)\cdot\nabla v\,\diff x=\int_\Omega f(x,u)v\,\diff x
\end{align*}
is satisfied for all $v\in C_c^\infty(\Omega)$.

Our main result reads as follows.

\begin{theorem}\label{maintheorem}
	Let hypotheses \eqref{H1} and \eqref{H2} be satisfied. Then, problem \eqref{problem} admits a sequence of solutions $\{u_n\}_{n\in\N}$ with $\|u_n\|+\|u_n\|_{\infty }\to 0$ as $n\to\infty$, where $\|\cdot\|_{\infty}$ is the norm in $\Linf$.
\end{theorem}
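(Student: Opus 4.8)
The strategy is a truncation-plus-symmetric-critical-point argument. Since $f$ is only defined and odd on $\Omega\times[-\e_0,\e_0]$ and $M$ is only controlled on $[0,t_0]$, I would first replace the problem by a globally well-posed one: extend $f$ to $\tilde f(x,t)$ that agrees with $f$ for $|t|\le\e_0/2$, is odd, bounded, and vanishes for $|t|\ge\e_0$; extend $M$ to $\tilde M$ that agrees with $M$ on $[0,t_0]$, stays between $m_0$ and $M(t_0)$, and is continuous on $[0,\infty)$ (e.g.\ constant $=M(t_0)$ beyond $t_0$). Set $\tilde{\mathcal M}(s):=\int_0^s\tilde M(\tau)\,\diff\tau$ and define the energy functional on $X:=W^{1,\mathcal H}_0(\Omega)$ by
\begin{equation*}
\Phi(u):=\tilde{\mathcal M}\left(\int_\Omega\mathcal A(x,\nabla u)\,\diff x\right)-\int_\Omega \tilde F(x,u)\,\diff x .
\end{equation*}
Because $\tilde F$ is bounded with bounded support in $t$ and $s\mapsto\tilde{\mathcal M}(s)$ is coercive on the modular (using the lower bound $m_0$), standard arguments via the compact embedding $X\hookrightarrow L^{r(\cdot)}(\Omega)$ (Section~\ref{Section-2}) show $\Phi\in C^1(X,\R)$, $\Phi$ is even, bounded below, coercive, and satisfies the Palais--Smale condition; hence also the $(\mathrm{PS})_c$ and the compactness hypotheses required by Kajikiya's theorem.

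Next I would verify the genus condition of Kajikiya \cite{Kajikiya-2005}: for every $k\in\N$ there is a symmetric compact set $A_k\subset X$ with $\gamma(A_k)\ge k$ and $\sup_{A_k}\Phi<0$. This is where hypothesis \eqref{H2iii} enters. Choose $k$ disjoint balls inside the ball $B$ from \eqref{H2iii} and corresponding normalized bump functions $\ph_1,\dots,\ph_k\in C_c^\infty(B)$ with disjoint supports; let $E_k:=\mathrm{span}\{\ph_1,\dots,\ph_k\}$ and take $A_k:=\{u\in E_k:\|u\|=\rho\}$, so $\gamma(A_k)=k$. On the finite-dimensional sphere $A_k$, for $u=\sum t_i\ph_i$ with small $\rho$ one has $u$ supported in $B$ and $\|u\|_\infty$ small, so $\tilde F(x,u)=F(x,u)$ there; the superhomogeneity from \eqref{H2iii}, namely $F(x,t)\ge C|t|^{p_B^-}$ near $t=0$ for $x\in B$ with $C$ arbitrarily large, forces $\int_B F(x,u)\,\diff x\ge C\,c(E_k)\,\rho^{p_B^-}$, while the modular term is bounded above by $c'\rho^{p^-}$ for small $\rho$ (recall $p^-\le p_B^-$, and for $\rho$ small the $p$-part dominates the $q$-part in $\mathcal A$). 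Choosing $C$ large then $\rho$ small yields $\Phi|_{A_k}<0$; a delicate point is ordering the limits correctly since the constant in \eqref{H2iii} controls behavior only as $t\to0$, so one fixes a threshold, then shrinks $\rho$ to stay below it. By Kajikiya's theorem $\Phi$ has a sequence of critical points $u_n\ne 0$ with $\Phi(u_n)\to 0^-$ and, in the precise form of the theorem, $\|u_n\|\to0$.

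Finally I would upgrade this to convergence in $L^\infty$, which is the genuinely new ingredient and the main obstacle. From $\|u_n\|\to0$ and the continuity/oddness/boundedness of $\tilde f$, the right-hand sides $\tilde f(\cdot,u_n)$ are bounded in $L^\infty(\Omega)$, and applying the a~priori bound of the authors \cite{Ho-Winkert-2022} for generalized double phase problems with variable exponents (whose hypotheses are guaranteed by \eqref{H1}, in particular $(q/p)^+<1+1/N$) gives $\|u_n\|_\infty\le C\bigl(1+\|u_n\|\bigr)^{\theta}$ or a similar estimate, hence $\|u_n\|_\infty$ is bounded. To get $\|u_n\|_\infty\to0$ one reruns the a~priori bound keeping track of the dependence on the modular of $u_n$: since $\|u_n\|\to0$ the modular $\int_\Omega\mathcal H(x,|\nabla u_n|)\,\diff x\to0$, and the De Giorgi / Moser iteration in \cite{Ho-Winkert-2022} yields $\|u_n\|_\infty\le C\,\|u_n\|^{\kappa}\to0$ for some $\kappa>0$; alternatively, once $\|u_n\|_\infty$ is uniformly bounded by some $\delta\le\e_0/2$ for large $n$, the $u_n$ solve the original (untruncated) problem \eqref{problem}, and then a standard argument shows $f(\cdot,u_n)\to0$ in $L^\infty$, closing the loop. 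The only technical care needed is to check that the constants in the a~priori estimate are uniform in $n$, which follows because they depend only on $N,p,q,\mu,\Omega$ and an upper bound for $\|\tilde f(\cdot,u_n)\|_\infty$, all of which are fixed.
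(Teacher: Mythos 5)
Your overall strategy is the same as the paper's (truncate $M$ and $f$, apply Kajikiya's theorem to the truncated even functional, then use the a priori bounds of \cite{Ho-Winkert-2022} to pass from $\|u_n\|\to 0$ to $\|u_n\|_\infty\to 0$ and back to the original problem), but there is a genuine gap in your verification of the ``negative on small spheres'' condition. You bound the Kirchhoff/modular term from above by $c'\rho^{p^-}$ and the potential term from below by $C\,c(E_k)\,\rho^{p_B^-}$, and then claim that choosing $C$ large and afterwards $\rho$ small gives $\Phi<0$ on $A_k$. Since $p^-\le p_B^-$ and $\rho<1$, the inequality goes the wrong way: $\rho^{p^-}\ge \rho^{p_B^-}$, so for any \emph{fixed} $C$ one has $c'\rho^{p^-}-C c(E_k)\rho^{p_B^-}=\rho^{p^-}\bigl(c'-Cc(E_k)\rho^{p_B^--p^-}\bigr)>0$ for all sufficiently small $\rho$ whenever $p^-<p_B^-$. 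Thus you would need $\rho$ to lie in a window: below the threshold $\delta_C$ coming from \eqref{H2}\eqref{H2iii} (which provides no rate, only some $\delta_C>0$ for each $C$) and above $\bigl(c'/(Cc(E_k))\bigr)^{1/(p_B^--p^-)}$; nothing in the hypotheses guarantees this window is nonempty (e.g.\ if the admissible $\delta_C$ decays superexponentially in $C$), so the argument as written does not close except when $p_B^-=p^-$ (constant $p$), which defeats the purpose of the variable-exponent setting.

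The fix is exactly the refinement the paper exploits: your test functions are supported in $B$, so in the modular only the exponents on $B$ matter, and for $\|u\|<1$ one gets $\int_\Omega\mathcal H(x,|\nabla u|)\,\diff x\le \|u\|^{p_B^-}$ (sharpening \eqref{estimate-modular2}), hence the upper bound $\frac{M(t_0)}{p^-}\rho^{p_B^-}$ with the \emph{same} exponent $p_B^-$ as the lower bound from \eqref{H2}\eqref{H2iii}. Then one first fixes the constant $C$ larger than a quantity depending only on $M(t_0)$, $p^-$ and the norm-equivalence constants on $E_k$, and afterwards shrinks $\rho$ below $\min\{1,\beta_k^{-1}\delta_C\}$; the condition on $\rho$ is now only an upper bound, so the two choices are compatible. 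One further small caveat: in your last step, the fallback ``once $\|u_n\|_\infty\le\eps_0/2$ the $u_n$ solve the original problem and $f(\cdot,u_n)\to0$'' does not by itself yield $\|u_n\|_\infty\to0$; as in the paper, you genuinely need the quantitative a priori estimate of \cite{Ho-Winkert-2022} (applied to $-\operatorname{div}A(x,\nabla u_n)=g_n$ with $|g_n|\le \|f_0\|_\infty/m_0$ after dividing by the Kirchhoff factor), which gives a bound on $\|u_n\|_\infty$ that vanishes as $\|u_n\|\to0$.
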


The proof of Theorem \ref{maintheorem} is based on an abstract critical point result of Kajikiya \cite{Kajikiya-2005} (see also Theorem \ref{Kaj.2005}) and recent a priori bounds for generalized double phase problems by the authors \cite{Ho-Winkert-2022} in which new embedding results of the form $W^{1,\mathcal{H}}(\Omega) \hookrightarrow  L^{\Psi}(\Omega)$, with
\begin{align*}
	\Psi(x,t):=t^{r(x)}+\mu(x)^{\frac{s(x)}{q(x)}}t^{s(x)}
	\quad\text{for } (x,t)\in \overline{\Omega}\times [0,\infty),
\end{align*}
where $r,s\in C(\close)$ satisfy $1<r(x)\leq \frac{Np(x)}{N-p(x)}=:p^*(x)$ and $1<s(x)\leq \frac{Nq(x)}{N-q(x)}=:q^*(x)$ for all $x\in\overline{\Omega}$ are presented.

The novelty of our work is the fact that we combine the variable exponent double phase operator with a Kirchhoff term and a reaction term that are both locally defined. As far as we know, there is no other work dealing with a Kirchhoff term along with the variable exponent double phase operator. In case the exponents $p,q$ are constants, we refer to the work of Fiscella-Pinamonti \cite{Fiscella-Pinamonti-2020} who considered the problem
\begin{equation}\label{problem2}
		-m \left[\int_\Omega \left( \frac{|\nabla u|^p}{p} + a(x) \frac{|\nabla u|^q}{q}\right)\,\diff x\right]\mathcal{L}_{p,q}^{a}(u) =f(x,u) \quad \text{in } \Omega,
		\qquad u  = 0 \quad \text{on } \partial\Omega,
\end{equation}
where $f\colon\Omega\times\R\to\R$ is a Carath\'eodory function that satisfies subcritical growth and the Ambrosetti-Rabinowitz condition and
\begin{align}\label{operator_double_phase}
	\mathcal{L}_{p,q}^{a}(u):= \operatorname{div} \left(|\nabla u|^{p-2}\nabla u + a(x) |\nabla u|^{q-2}\nabla u \right), \quad u\in W^{1,\mathcal{H}}_0(\Omega).
\end{align}
By applying the mountain-pass theorem, the existence of a nontrivial weak solution of \eqref{problem2} is shown. Recently, Arora-Fiscella-Mukherjee-Winkert \cite{ Arora-Fiscella-Mukherjee-Winkert-2022a} studied singular Kirchhoff double phase problems given by
\begin{equation*}
		-m \left[\int_\Omega \left( \frac{|\nabla u|^p}{p} + a(x) \frac{|\nabla u|^q}{q}\right)\,\diff x\right]\mathcal{L}_{p,q}^{a}(u) = \lambda u^{-\gamma} +u^{r-1} \quad \text{in } \Omega,
		\qquad u  = 0 \quad \text{on } \partial\Omega,
\end{equation*}
with $\mathcal{L}_{p,q}^{a}$ as in \eqref{operator_double_phase},
where  a suitable Nehari manifold decomposition provides the existence of two different solutions. Another interesting work in the context of Kirchhoff constant exponent double phase problems has been published in \cite{Fiscella-Marino-Pinamonti-Verzellesi-2022} with nonlinear boundary condition based on variational tools. All these works use different methods than in our paper.

It should be noted that the occurrence of a nonlocal Kirchhoff term was first introduced by Kirchhoff \cite{Kirchhoff-1876}. Such problems have a strong background in several applications in physics. Existence results on degenerate and nondegenerate Kirchhoff problems for different type of problems can be found, for example, in the works \cite{Autuori-Pucci-Salvatori-2010, Fiscella-2019, Fiscella-Valdinoci-2014, Mingqi-Radulescu-Zhang-2019,  Pucci-Xiang-Zhang-2015,  Xiang-Zhang-Radulescu-2016} and the references therein.

If $m(t) \equiv 1$ for all $t \geq 0$, problem \eqref{problem} reduces to a double phase problem with variable exponents. In this case, only few and very recent results exist. We refer to the papers \cite{Bahrouni-Radulescu-Winkert-2020, Crespo-Blanco-Gasinski-Harjulehto-Winkert-2022, Kim-Kim-Oh-Zeng-2022,  Leonardoi-Papageorgiou-2022,  Vetro-Winkert-2023,  Zeng-Radulescu-Winkert-2021}, see also the references therein. If $p$ and $q$ are constants, we point out that the double phase operator in \eqref{problem} is associated to the functional
\begin{align}\label{integral_minimizer}
	u\mapsto \int_\Omega \left(\frac{1}{p}|\nabla  u|^{p}+\frac{\mu(x)}{q}|\nabla  u|^{q}\right)\,\diff x,
\end{align}
which occurred for the first time in the work of Zhikov \cite{Zhikov-1986}. Such functionals are used to describe models for strongly anisotropic materials in the context of homogenization and elasticity. In the past decade, functionals of the form \eqref{integral_minimizer} have been studied by several authors concerning regularity properties of local minimizers, we refer to the papers \cite{Baroni-Colombo-Mingione-2015,Baroni-Colombo-Mingione-2018, Colombo-Mingione-2015a, Colombo-Mingione-2015b}, see also \cite{Ragusa-Tachikawa-2020} for variable exponents and the recent paper \cite{DeFilippis-Mingione-ARMA-2021} about nonautonomous integrals.

\section{Preliminaries and Notations}\label{Section-2}

In this section we recall the main properties about  Musielak-Orlicz Sobolev spaces and the double phase operator with variable exponents along with an abstract critical point result.

To this end, let $\Omega$ be a bounded domain in $\mathbb{R}^N$ with Lipschitz boundary $\partial\Omega$ and let $M(\Omega)$ be the space of all measurable functions $u\colon \Omega\to\R$. We denote by $\Lp{r}$ the usual Lebesgue space endowed with the norm $\|\cdot\|_{r}$ for any $1\leq r \leq \infty$. Suppose \eqref{H1} and let $\mathcal{H} \colon \overline{\Omega} \times [0,\infty) \to [0,\infty)$ be the nonlinear function defined by
\begin{align*}
	\mathcal{H}(x,t):=t^{p(x)} +\mu(x)t^{q(x)} \quad\text{for all } (x,t)\in \overline{\Omega} \times [0,\infty).
\end{align*}
The corresponding modular to $\mathcal{H}$ is given by
\begin{align*}
	\rho_{\mathcal{H}}(u) = \into \mathcal{H} (x,|u|)\,\diff x
	=\into \left(|u|^{p(x)}+\mu(x)|u|^{q(x)}\right)\,\diff x
\end{align*}
and the associated Musielak-Orlicz space $\Lp{\mathcal{H}}$ is then defined by
\begin{align*}
	L^{\mathcal{H}}(\Omega)=\left \{u \in M(\Omega) \,:\,\rho_{\mathcal{H}}(u) < +\infty \right \}
\end{align*}
endowed with the Luxemburg norm $\|u\|_{\mathcal{H}} = \inf \left \{ \tau >0 : \rho_{\mathcal{H}}\left(\frac{u}{\tau}\right) \leq 1  \right \}$.
When $\mu(\cdot)\equiv 0$, we write $L^{p(\cdot)}(\Omega)$ in place of $L^{\mathcal{H}}(\Omega)$. Similarly, the Musielak-Orlicz Sobolev space $\Wp{\mathcal{H}}$ is defined by
\begin{align*}
	\Wp{\mathcal{H}}
	=\left \{u \in L^{\mathcal{H}}(\Omega) \,:\,|\nabla u| \in L^{\mathcal{H}}(\Omega) \right \}
\end{align*}
equipped with the norm $\|u\|_{1,\mathcal{H}} = \|u\|_{\mathcal{H}}+\|\nabla u\|_{\mathcal{H}}$,
where $\|\nabla u\|_{\mathcal{H}}=\| \, |\nabla u| \,\|_{\mathcal{H}}$. Moreover, $\Wpzero{\mathcal{H}}$ is the completion of $C^\infty_0(\Omega)$ in $\Wp{\mathcal{H}}$. We know that  $\Lp{\mathcal{H}}$, $\Wp{\mathcal{H}}$ and $\Wpzero{\mathcal{H}}$ are reflexive Banach spaces and we can equip $\Wpzero{\mathcal{H}}$ with the equivalent norm $\|\cdot\|:=\|\nabla \cdot \|_{\mathcal{H}}$,
see \cite{Crespo-Blanco-Gasinski-Harjulehto-Winkert-2022}.

Moreover, we have
\begin{align}
		\|u\|^{p^-}-1 \leq \rho_\mathcal{H}(|\nabla u|) &\leq \|u\|^{q^+}+1 \quad \text{for all }u\in\Wpzero{\mathcal{H}},\label{estimate-modular}\\
		\|u\|^{q^+} \leq \rho_\mathcal{H}(|\nabla u|)&\leq 	\|u\|^{p^-} \quad\text{for all }u\in\Wpzero{\mathcal{H}} \text{ with }\|u\|<1,\label{estimate-modular2}
\end{align}
and
\begin{align}\label{compact-embedding}
	\Wpzero{\mathcal{H}} \hookrightarrow \Lp{r(\cdot)}
\end{align}
is compact for $r \in C(\close)$ with $ 1 \leq r(x) < p^*(x)$ for all $x\in \close$,
see \cite[Propositions 2.13 and 2.16]{Crespo-Blanco-Gasinski-Harjulehto-Winkert-2022}.

Let $B\colon \Wpzero{\mathcal{H}}\to \Wpzero{\mathcal{H}}^*$ be the nonlinear map defined by
\begin{align}\label{operator_representation}
	\langle B(u),v\rangle :=\into \big(|\nabla u|^{p(x)-2}\nabla u+\mu(x)|\nabla u|^{q(x)-2}\nabla u \big)\cdot\nabla v \,\diff x
\end{align}
for all $u,v\in\Wpzero{\mathcal{H}}$, where $\lan\,\cdot\,,\,\cdot\,\ran$ is the duality pairing between $\Wpzero{\mathcal{H}}$ and its dual space $\Wpzero{\mathcal{H}}^*$.  The operator $B\colon \Wpzero{\mathcal{H}}\to \Wpzero{\mathcal{H}}^*$ has the following properties, see \cite[Theorem 3.3]{Crespo-Blanco-Gasinski-Harjulehto-Winkert-2022}.

\begin{proposition}\label{properties_operator_double_phase}
	Let hypotheses \eqref{H1}  be satisfied. Then, the operator $B$ defined in \eqref{operator_representation} is bounded, continuous, strictly monotone and of type $(\Ss_+)$, that is, $u_n\weak u$  in $\Wpzero{\mathcal{H}}$ and $\limsup_{n\to\infty}\,\langle Bu_n,u_n-u\rangle\le 0$, imply $u_n\to u$ in $\Wpzero{\mathcal{H}}$.
\end{proposition}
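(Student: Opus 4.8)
The plan is to regard $B$ as the Gâteaux derivative of the convex $C^1$-functional $u\mapsto\into\mathcal{A}(x,\nabla u)\,\diff x$ on $\Wpzero{\mathcal{H}}$ and to establish the four properties by pointwise analysis of the integrand together with the modular estimates \eqref{estimate-modular}, \eqref{estimate-modular2}. For \emph{boundedness}, given $u\in\Wpzero{\mathcal{H}}$ and $v$ with $\|v\|\le 1$, I would apply H\"older's inequality in the variable exponent spaces $L^{p(\cdot)}(\Omega)$ and $L^{q(\cdot)}(\Omega)$ to the two summands of $\langle B(u),v\rangle$ (splitting $\mu=\mu^{1/q'}\mu^{1/q}$ in the second one) to bound $|\langle B(u),v\rangle|$ by a constant times $\||\nabla u|^{p(\cdot)-1}\|_{p'(\cdot)}+\|\mu^{1/q'}|\nabla u|^{q(\cdot)-1}\|_{q'(\cdot)}$; each factor is controlled by a power of $\rho_{\mathcal{H}}(|\nabla u|)$ and hence, via \eqref{estimate-modular}, by a power of $\|u\|$, so $B$ maps bounded sets to bounded sets. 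For \emph{continuity}, if $u_n\to u$ in $\Wpzero{\mathcal{H}}$ then $\nabla u_n\to\nabla u$ in $L^{\mathcal{H}}(\Omega;\R^N)$; passing to an arbitrary subsequence and then a further one along which $\nabla u_n\to\nabla u$ a.e.\ with an $L^1$-dominating function for $\mathcal{H}(x,|\nabla u_n|)$, the maps $|\nabla u_n|^{p(x)-2}\nabla u_n$ and $\mu(x)|\nabla u_n|^{q(x)-2}\nabla u_n$ converge a.e.\ with suitable domination, so $B(u_n)\to B(u)$ in $\Wpzero{\mathcal{H}}^*$; since the limit does not depend on the subsequence, the whole sequence converges.

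For \emph{strict monotonicity} I would use the elementary inequality $(|\xi|^{r-2}\xi-|\eta|^{r-2}\eta)\cdot(\xi-\eta)>0$ for all $\xi\neq\eta$ in $\R^N$ and $r>1$, applied with $r=p(x)$ and $r=q(x)$; integrating gives $\langle B(u)-B(v),u-v\rangle\ge 0$, and equality forces the nonnegative integrand to vanish a.e., i.e.\ $\nabla u=\nabla v$ a.e., which implies $u=v$ because $u-v\in\Wpzero{\mathcal{H}}$.

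The $(\Ss_+)$-property is the substantive part. Assume $u_n\weak u$ in $\Wpzero{\mathcal{H}}$ and $\lims\langle B(u_n),u_n-u\rangle\le 0$. From $u_n\weak u$ one has $\langle B(u),u_n-u\rangle\to 0$, hence $\lims\langle B(u_n)-B(u),u_n-u\rangle\le 0$; combined with monotonicity this gives $\langle B(u_n)-B(u),u_n-u\rangle\to 0$. Writing $e_n(x)\ge 0$ for the integrand, $e_n\to 0$ in $L^1(\Omega)$, so along a subsequence $e_n\to 0$ a.e.\ A pointwise argument then yields $\nabla u_n\to\nabla u$ a.e.: at a point where $e_n(x)\to 0$, if $|\nabla u_n(x)|$ were unbounded along a subsequence then the $p(x)$-term alone would force $e_n(x)\to\infty$, a contradiction, so $\{\nabla u_n(x)\}$ is bounded and each of its limit points equals $\nabla u(x)$ by the strict monotonicity of $\xi\mapsto|\xi|^{p(x)-2}\xi$. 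Since $\{|\nabla u_n|^{p(x)-2}\nabla u_n\}$ and $\{\mu(x)|\nabla u_n|^{q(x)-2}\nabla u_n\}$ are bounded in $L^{p'(\cdot)}(\Omega;\R^N)$ and in the $\mu$-weighted $L^{q'(\cdot)}$-space respectively and converge a.e., they converge weakly to $|\nabla u|^{p(x)-2}\nabla u$ and $\mu(x)|\nabla u|^{q(x)-2}\nabla u$; hence $B(u_n)\weak B(u)$, in particular $\langle B(u_n),u\rangle\to\langle B(u),u\rangle$. Together with $\langle B(u_n),u_n-u\rangle\to 0$ this shows $\rho_{\mathcal{H}}(|\nabla u_n|)=\langle B(u_n),u_n\rangle\to\langle B(u),u\rangle=\rho_{\mathcal{H}}(|\nabla u|)$. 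Finally, combining $\nabla u_n\to\nabla u$ a.e., the convergence of the modulars, and the convexity bound $\mathcal{H}(x,|\nabla u_n-\nabla u|)\le 2^{q^+}\big(\mathcal{H}(x,|\nabla u_n|)+\mathcal{H}(x,|\nabla u|)\big)$, the generalized dominated convergence theorem yields $\rho_{\mathcal{H}}(|\nabla u_n-\nabla u|)\to 0$, which by the modular--norm relation means $\|u_n-u\|\to 0$; by the usual subsequence argument the full sequence converges. The main obstacle I anticipate lies precisely in this last block: passing from $\langle B(u_n)-B(u),u_n-u\rangle\to 0$ to a.e.\ convergence of the gradients while controlling the possible blow-up of $|\nabla u_n(x)|$ and the degeneracy on $\{\mu=0\}$, and then upgrading a.e.\ convergence to modular --- hence norm --- convergence in the Musielak--Orlicz setting.
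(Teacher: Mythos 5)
Your argument is correct in its essentials, but note that the paper itself contains no proof of this proposition: it is quoted verbatim from \cite[Theorem 3.3]{Crespo-Blanco-Gasinski-Harjulehto-Winkert-2022}, so the real comparison is with that reference, whose proof follows the same standard scheme you adopt (H\"older-type estimates in the variable exponent and $\mu$-weighted spaces for boundedness, pointwise monotonicity inequalities for strict monotonicity, and for the $(\Ss_+)$-property the passage from $\langle B(u_n)-B(u),u_n-u\rangle\to 0$ to a.e.\ convergence of the gradients and then to modular, hence norm, convergence). Two points in your sketch deserve explicit justification if written out: (i) the step ``bounded in $L^{p'(\cdot)}(\Omega;\R^N)$ plus a.e.\ convergence implies weak convergence'' (and its analogue in the $\mu$-weighted $L^{q'(\cdot)}$-space) is a lemma in its own right — it follows from reflexivity together with identification of the weak limit of any weakly convergent subsequence with the a.e.\ limit, e.g.\ by testing against bounded functions; (ii) the a.e.\ convergence $e_n\to 0$ holds only along a subsequence, so the concluding ``usual subsequence argument'' must be run from the very beginning: start from an arbitrary subsequence and extract a further one along which the entire chain applies, which is legitimate because the hypotheses $u_n\weak u$ and $\lims\langle Bu_n,u_n-u\rangle\le 0$ are inherited by subsequences. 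With these details filled in, your proof is a complete, self-contained substitute for the citation, and it buys independence from the quoted reference at the cost of reproving what is already available there.
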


Let $X$ be a Banach space, let $X^*$ be its dual space and let $\ph\in C^1(X,\R)$. We say that $\{u_n\}_{n\in\mathbb N}\subseteq X$ is a Palais-Smale sequence (\textnormal{(PS)}-sequence for short) for $\ph$ if $\{\ph(u_n)\}_{n \in \mathbb{N}}\subseteq\R$ is bounded and $\ph'(u_n)\to 0 \quad\text{in }X^*\quad\text{as }n\to\infty$. We say that $\ph$ satisfies the Palais-Smale condition (\textnormal{(PS)}-condition for short) if any \textnormal{(PS)}-sequence $\{u_n\}_{n\in\N}$ of $\ph$ admits a convergent subsequence in $X$.
The proof of Theorem \ref{maintheorem} is based on the following abstract critical point result due to Kajikiya \cite[Theorem 1]{Kajikiya-2005}.

\begin{theorem}\label{Kaj.2005}
	Let $(X,\|\cdot\|)$ be an infinite dimensional Banach space and  $J\in C^1(X,\R)$ such that the following two assumptions hold:
	\begin{enumerate}
		\item[\textnormal{(J1)}]
			$J$ is even, bounded from below, $J(0)=0$ and it satisfies the \textnormal{(PS)}-condition.
		\item[\textnormal{(J2)}]
			For any $k \in \N$, there exist a $k$-dimensional subspace $X_k$ of X and a number $r_k>0$ such that $\sup_{X_k \cap S_{r_k}}J(u) <0$, where $S_{r_k} =\{ u \in X\,:\,\, \|u\|=r_k\}$.
	\end{enumerate}
	Then, the functional $J$ admits a sequence of critical points $\left\{v_k\right\}_{k\in\N}$ satisfying  $\|v_k\| \to 0$ as $k \to \infty$.
\end{theorem}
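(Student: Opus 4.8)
The plan is to deduce Theorem~\ref{Kaj.2005} from Krasnoselskii's genus together with an equivariant deformation argument; the construction is the symmetric minimax underlying Clark's theorem, but the delicate point---and the real content of Kajikiya's refinement---is to force the critical points to \emph{converge to the origin in norm}, not merely to have their critical values tend to $0$. Throughout, $\gamma(A)$ denotes the genus of a closed symmetric set $A\subseteq X\setminus\{0\}$, and I would freely use its standard properties: monotonicity, subadditivity $\gamma(A\cup B)\le\gamma(A)+\gamma(B)$, the fact that a compact such $A$ has $\gamma(A)<\infty$ and admits a symmetric neighborhood of the same genus, and $\gamma(X_k\cap S_r)=k$ for a $k$-dimensional subspace $X_k$ and $r>0$. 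Since $J$ is even and of class $C^1$ one has $J'(0)=0$, and the (PS)-condition yields an \emph{odd} pseudo-gradient field, hence an equivariant flow decreasing $J$.

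First I would set up the minimax values. Writing $K_c:=\{u\in X:J(u)=c,\ J'(u)=0\}$ and $J^c:=\{u\in X:J(u)\le c\}$, define for $k\in\N$
\[
 c_k:=\inf_{\gamma(A)\ge k}\ \sup_{u\in A}J(u),
\]
the infimum taken over all closed symmetric $A\subseteq X\setminus\{0\}$. Since a larger genus is a stronger constraint, $c_k\le c_{k+1}$, and $c_k\ge\inf_X J>-\infty$ by (J1). Hypothesis (J2) provides, for each $k$, a set $A_k:=X_k\cap S_{r_k}$ with $\gamma(A_k)=k$ and $\sup_{A_k}J<0$, whence $c_k<0$; thus $c_k$ increases to some $c^*\le 0$. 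A standard equivariant deformation argument shows each $c_k$ is a critical value and that a repeated value forces $\gamma(K_{c_k})$ to be correspondingly large; since $K_c$ is compact (by (PS)) and avoids $0$ for $c<0$, its genus is finite, so $\{c_k\}$ cannot be eventually constant and therefore realizes infinitely many distinct negative critical values.

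The main obstacle is the norm convergence, which does \emph{not} follow from $c_k\to 0$ alone, since a priori there could be nonzero critical points at levels close to $0$ sitting far from the origin. The device I would use is to control the genus of the sublevel sets. The key lemma is that $\gamma(J^c)<\infty$ for every $c<0$: the set $K_{\le c}:=\{u:J'(u)=0,\ J(u)\le c\}$ is compact by (PS), so choose a symmetric neighborhood $U\supseteq K_{\le c}$ with $\gamma(\overline U)=\gamma(K_{\le c})$; because (PS) keeps $\|J'\|$ bounded away from $0$ on $J^c\setminus U$ while $J$ is bounded below, the equivariant negative flow drives $J^c$ into $U$ in finite time, giving an odd map $J^c\to\overline U$ and hence $\gamma(J^c)\le\gamma(K_{\le c})<\infty$. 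This has two consequences: first, for $k>\gamma(J^{-\delta})$ no admissible set of genus $\ge k$ can lie in $J^{-\delta}$, so $c_k\ge-\delta$, and as $\delta>0$ is arbitrary $c_k\to 0$; second, since (J2) makes $\gamma(J^c)$ arbitrarily large at some negative levels while $c\mapsto\gamma(J^c)$ is monotone, one gets $\gamma(J^c)\to\infty$ as $c\to 0^-$.

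Finally I would locate the critical points near $0$ by contradiction. Suppose there were $\rho,\delta_0>0$ such that every critical point $u$ with $J(u)\in(-\delta_0,0)$ satisfies $\|u\|\ge\rho$. Then the closure of $\{u:J'(u)=0,\ J(u)\in(-\delta_0,0)\}$ is compact (by (PS)), symmetric, and disjoint from $0$, hence of finite genus $m_1$; with $m_0:=\gamma(K_{\le-\delta_0})<\infty$, the inclusion of $K_{\le c}$ into the union of these two sets, valid for all $c\in(-\delta_0,0)$, yields via the same flow-down estimate $\gamma(J^c)\le m_0+m_1$ uniformly on $(-\delta_0,0)$, contradicting $\gamma(J^c)\to\infty$. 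Therefore, for every $\rho,\delta>0$ there is a critical point $u$ with $J(u)\in(-\delta,0)$ and $\|u\|<\rho$, and such a $u$ is nonzero because $J(u)<0=J(0)$. Selecting a critical point $v_k$ with $\|v_k\|<1/k$ then gives the desired sequence with $\|v_k\|\to 0$. The technical lynchpins are the odd pseudo-gradient flow (from evenness and $C^1$) and the compactness/finiteness extracted from (PS); granting these, the genus blow-up of the sublevel sets is exactly what pins the critical points to the origin.
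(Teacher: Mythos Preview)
The paper does not prove Theorem~\ref{Kaj.2005} at all; it is quoted verbatim from Kajikiya \cite[Theorem~1]{Kajikiya-2005} and used as a black box in the proof of Theorem~\ref{maintheorem}. There is therefore no ``paper's own proof'' to compare your attempt against.

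That said, your sketch is a faithful outline of Kajikiya's original argument: the genus minimax values $c_k=\inf_{\gamma(A)\ge k}\sup_A J$, the equivariant deformation coming from an odd pseudo-gradient flow, the finiteness of $\gamma(J^c)$ for $c<0$ (which forces $c_k\uparrow 0$), and the final contradiction showing that the critical set near level $0$ cannot stay uniformly away from the origin. The one place where your write-up is a bit breezy is the ``flow-down estimate'' $\gamma(J^c)\le\gamma(K_{\le c})$; what one actually proves is that $J^c$ can be equivariantly deformed into any symmetric neighborhood $U$ of $K_{\le c}$, and one then chooses $U$ with $\gamma(\overline U)=\gamma(K_{\le c})$. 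Making the deformation land in $\overline U$ (rather than merely visiting it) requires the usual cutoff trick or the second deformation lemma, not just the raw negative-gradient flow. This is standard and is handled in Kajikiya's paper, but it is the step most worth spelling out if you were to write the proof in full.
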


\section{Proof of the main result}
In this section we give the proof of Theorem \ref{maintheorem}.

\begin{proof}[Proof of Theorem \ref{maintheorem}]
	Since the the conditions on the Kirchhoff and reaction terms are given locally, the corresponding energy functional associated with problem \eqref{problem} may not be well defined. In order to deal with this difficulty and get the symmetry of the associated energy functional, we first modify the functions $M$ and $f$ as follows: We define  $M_0, \widehat{M}_0\colon [0,\infty) \to \R$ given by
	\begin{equation*}
		M_0(t):=
		\begin{cases}
			M(t) &\text{if }0\leq t\leq t_0,\\
			M(t_0) &\text{if }t>t_0,
		\end{cases}
		\quad\text{and}\quad
		\widehat{M}_0(t):=\int_0^t M_0(s)\,\diff s.
	\end{equation*}
	It is clear that $M_0\in C([0,\infty),\R)$ and
	\begin{align}
		m_0&\leq M_0(t)\leq M(t_0) \quad \text{for all }t\in[0,\infty),\label{Est.M0}\\
		m_0t&\leq \widehat{M}_0(t)\leq M(t_0)t\quad \text{for all }t\in[0,\infty).\label{Est.hat.M0}
	\end{align}
	Next, let $\eta \in C_c^\infty(\R)$ be a function such that $0 \leq \eta(t)=\eta(-t) \leq 1$ for  $t\in\R$ and
	\begin{align*}
		\eta(t)=1 \quad\text{for }|t|\leq \frac{\eps_0}{2},\qquad \eta(t)=0\quad \text{for }|t| \geq \eps_0,
	\end{align*}
	where $\eps_0$ is given in \eqref{H2}\eqref{H2ii}.
	For $x\in \Omega$ we define
	\begin{align*}
		h(x,t):=
		\begin{cases}
			\eta(t)f(x,t)&\text{if } |t|\leq\eps_0,\\
			0& \text{if }|t|\geq\eps_0,
		\end{cases}
		\quad \text{and} \quad
		H(x,t):=\int_0^t h(x,s)\,\diff s.
	\end{align*}
	Obviously, we have
	\begin{equation}\label{PT.Sub.h}
		\sup_{t\in\mathbb{R}}\, |h(x,t)|\leq \sup_{|t|\leq\eps_0}\, |f(x,t)|=:f_0(x)\quad \text{for a.\,a.\,} x\in\Omega.
	\end{equation}
	Furthermore, $H$ is even with respect to the second variable and
	\begin{equation}\label{PT.Sub.H}
		\sup_{t\in\mathbb{R}}\, |H(x,t)|\leq \eps_0 f_0(x)
		\quad \text{for a.\,a.\,}\ x\in\Omega.
	\end{equation}
	Note that $f_0\in L^{\infty}(\Omega)$ by hypothesis \eqref{H2}\eqref{H2ii}.

	Now, we consider the following modified problem
	\begin{equation}\label{problem'}
			-M_0\left(\int_\Omega \mathcal{A}(x,\nabla u)\,\diff x\right)\operatorname{div}A(x,\nabla u)= h(x,u)\quad \text{in } \Omega,
			\quad u= 0 \quad \text{on } \partial\Omega.
	\end{equation}
	We point out that if $\{v_k\}_{k\in\N }$ is a sequence of solutions to problem \eqref{problem'} satisfying $\|v_k\|+\|v_k\|_{\infty}\to 0$ as $n\to\infty$, then $\{v_k\}_{k\geq k_0}$ is a sequence of solutions to problem \eqref{problem} for some $k_0\in\N$. In order to derive the desired conclusion, we will apply Theorem \ref{Kaj.2005} for $(X,\|\cdot\|):=(W_0^{1,\mathcal{H}}(\Omega),\|\nabla\cdot \|_{\mathcal{H}})$ and
	\begin{align*}
		J(u) :=\widehat{M}_0\left(\int_\Omega \mathcal{A}(x,\nabla u)\,\diff x\right)-\int_\Omega H(x,u)\,\diff x,\quad u\in X.
	\end{align*}
	First, we see that $J\colon X\to\R$ is of class $C^1$ and its Fr\'echet derivative $J'\colon X\to X^*$ is given by
	\begin{align*}
	\left\langle J' (u),v\right\rangle= M_0\left(\int_\Omega \mathcal{A}(x,\nabla u)\,\diff x\right)\int_\Omega A(x,\nabla u)\cdot\nabla v\,\diff x-\int_\Omega h(x,u)v\,\diff x
	\end{align*}
	for all $u,v\in X$. 
	 Clearly, any critical point of $J$ is a solution of problem \eqref{problem'}. We will verify that $J$  satisfies conditions \textnormal{(J1)} and \textnormal{(J2)} of Theorem \ref{Kaj.2005}.

	{\bf Step 1:} $J$ fulfills \textnormal{(J1)}

	Clearly, $J$ is even and $J(0)=0$. By \eqref{Est.hat.M0}, \eqref{PT.Sub.H} and \eqref{estimate-modular}, we have
	\begin{align*}
		J(u)\geq m_0\int_\Omega \mathcal{A}(x,\nabla u)\,\diff x-\eps_0\int_\Omega f_0(x)\,\diff x
		\geq \frac{1}{q^+}\left(\|u\|^{p^-}-1\right)-\eps_0\|f_0\|_{1}
		\quad \text{for all }u\in X.
	\end{align*}
	This implies that $J$ is coercive and bounded from below on $X$ since $p^->1$. For verification of the \textnormal{(PS)}-condition for $J$, let $\{u_n\}_{n\in\N}$ be a \textnormal{(PS)}-sequence for $J$, that is
	\begin{equation}\label{A.J'}
		J'(u_n)\to 0\quad  \text{in} \ X^*
	\end{equation}
	and
	\begin{equation}\label{A.bound-J}
		\sup_{n\in\mathbb{N}}|J(u_n)|< \infty.
	\end{equation}
	Then, the coercivity of $J$ and \eqref{A.bound-J} guarantee the boundedness of $\{u_n\}_{n\in\N}$ in $X$. Thus, up to a subsequence if necessary, we have
	\begin{equation}\label{PT.maintheorem.convergence}
		u_n \rightharpoonup u  \quad \text{in }  X
		\quad \text{and}\quad
		u_n\to u\quad \text{in } L^{1}(\Omega),
	\end{equation}
	by \eqref{compact-embedding}. On the other hand, we have
	\begin{align*}
		&M_0\left(\int_\Omega \mathcal{A}(x,\nabla u_n)\,\diff x\right)\int_{\Omega}A(x,\nabla u_n)\cdot (\nabla u_n-\nabla u)\,\diff x
		=\big\langle J'(u_n) ,u_n-u \big\rangle +\int_{\Omega}h(x,u_n)(u_n-u)\,\diff x.
	\end{align*}
	Combining this with \eqref{Est.M0} and \eqref{PT.Sub.h} yields
	\begin{align*}
		m_0\left|\int_{\Omega}A(x,\nabla u_n)\cdot (\nabla u_n-\nabla u)\,\diff x\right|\leq \|J'(u_n)\|_{X^*} \|u_n-u\|+\eps_0\|f_0\|_{\infty} \|u_n-u\|_{1}.
	\end{align*}
	Invoking \eqref{A.J'}, \eqref{PT.maintheorem.convergence} and the boundedness of $\{u_n\}_{n\in\N}$ in $X$, from the last inequality it follows that
	\begin{align*}
		\int_{\Omega}A(x,\nabla u_n)\cdot (\nabla u_n-\nabla u)\diff x\to 0\quad \text{as } n\to\infty.
	\end{align*}
	Hence, $u_n\to u$ in $X$ in view of Proposition \ref{properties_operator_double_phase}. Thus,  $J$ satisfies the \textnormal{(PS)}-condition and so \textnormal{(J1)} is fulfilled.

	{\bf Step 2:} $J$ fulfills \textnormal{(J2)}

	Let $k\in\mathbb{N}$ be given and set $X_k:=\operatorname{span}\{\varphi_1,\varphi_2,\cdots,\varphi_k\}$, where $\varphi_n$ is an eigenfunction corresponding to the $n$-th eigenvalue of the eigenvalue problem
	$-\Delta u=\lambda u$ in $B$, $u=0$  on $\partial B$, and it is extended on $\Omega$ by putting $\varphi_n(x)=0$ for $x\in\Omega\setminus B$. Since $X_k$ is finitely dimensional, all norms on $X_k$ are equivalent. Thus, we find positive constants $\alpha_k,\beta_k$ such that
	\begin{equation}\label{d2}
		\beta_k\|u\|_{\infty}
		\leq\|u\|\leq
		\alpha_k\|u\|_{p_B^-}
		\quad \text{for all }u \in X_k.
	\end{equation}
	By condition \eqref{H2}\eqref{H2iii} we can choose
	\begin{align}\label{assumptions}
		M_k>\frac{M(t_0)\alpha_k^{p_B^-}}{p^-}
		\quad\text{and}\quad
		\delta_k\in (0,\eps_0/2)
	\end{align}
	such that
	\begin{equation}\label{d3}
		H(x,t)=F(x,t) \geq M_k |t|^{p_B^-}, \
	\end{equation}
	for a.\,a.\,$x\in B$ and for all $|t|<\delta_k$.

	Let $r_k\in \left(0,\min\{1,\beta_k^{-1}\delta_k\}\right)$. Then, from \eqref{d2} we have
	\begin{equation}\label{d4}
		\|u\|<1
		\quad\text{and}\quad
		\|u\|_{\infty}\leq \beta_k^{-1}r_k<\delta_k<\frac{\eps_0}{2}\quad \text{for all }u\in X\cap S_{r_k},
	\end{equation}
	where $S_{r_k} =\left\{ u \in X\,:\,\, \|u\|=r_k\right\}$. Utilizing \eqref{Est.hat.M0}, \eqref{d3} and then \eqref{d4} with noticing $\operatorname{supp} (u)\subset B$ we obtain
	\begin{align*}
		J(u)&=\widehat{M}_0\left(\int_\Omega \mathcal{A}(x,\nabla u)\,\diff x\right)-\int_\Omega H(x,u)\,\diff x
		\leq M(t_0)\int_\Omega \mathcal{A}(x,\nabla u)\,\diff x-\int_{\Omega} F(x,u)\,\diff x\\
		&\leq \frac{M(t_0)}{p^-} \int_{B} \left[|\nabla u|^{p(x)}+\mu(x)|\nabla u|^{q(x)}\right]\,\diff x-M_k \int_{B}|u|^{p_B^-}\,\diff x
	\end{align*}
	for all  $u\in X_k \cap S_{r_k}$. Invoking \eqref{estimate-modular2} and \eqref{d2} we infer from the last inequality that
	\begin{align*}
		J(u)
		&\leq\frac{M(t_0)}{p^-}\|u\|^{p_B^-}-M_k\|u\|_{p_B^-}^{p_B^-}
		\leq \frac{M(t_0)}{p^-}\|u\|^{p_B^-}-M_k (\alpha_k^{-1}\|u\|)^{p_B^-}
		=\left(\frac{M(t_0)}{p^-}-M_k\alpha_k^{-p_B^-}\right) r_k^{p_B^-}.
	\end{align*}
	Thus, we obtain $\sup_{X_k \cap S_{r_k}}J(u)<0$ due to \eqref{assumptions}. Hence, $J$ satisfies \textnormal{(J2)}.

	Applying Theorem \ref{Kaj.2005} we find a sequence of critical points $\{v_k\}_{k\in\N}$ of $J$ satisfying $J(v_k)<0$ for all $k\in\N $ and $\|v_k\| \to 0$ as $k \to \infty$. Therefore,  $ v_k$ are nontrivial solutions of problem \eqref{problem'}, which can be rewritten as
	\begin{equation*}
			- \operatorname{div}A(x,\nabla u)= g(x,u)\quad\text{in } \Omega,
			\qquad u= 0 \quad \text{on } \partial\Omega,
	\end{equation*}
	where
	\begin{align*}
		g(x,u):=\frac{h(x,u)}{M_0\left(\displaystyle \int_\Omega \mathcal{A}(x,\nabla u)\diff x\right)}
		\quad\text{with}\quad |g(x,t)|\leq \dfrac{\|f_0\|_{\infty}}{m_0}
	\end{align*}
	for a.\,a.\,$x\in\Omega$ and for all $t\in\R$. According to Theorem  4.2 and Proposition 3.7 of the authors \cite{Ho-Winkert-2022}, we also have that $\|v_k\|_{\infty}\to 0$ as $k\to\infty$. Hence $\{v_k\}_{k\geq k_0} $ for some $k_0\in\N$ are solutions to our original problem \eqref{problem} and satisfy $\|v_k\|+\|v_k\|_{\infty}\to 0$ as $k\to\infty$. This finishes the proof.
\end{proof}

\section*{Acknowledgment}
The first author was partially supported by the National Research Foundation of Korea (NRF) grant funded by the Korea government (MSIT) (grant No. 2022R1A4A1032094).

\end{document}